\newcommand{\todd}{totally odd $K_4$-subdivision}
\newcommand{\ac}{$\alpha$-critical}
\newcommand{\Ec}{E_{c}}
\newcommand{\asub}{critical subgraph}
\newtheorem{theorem}{Theorem}
\newtheorem{lemma}{Lemma}
\newtheorem{claim}{Claim}
\title{On a Theorem of Sewell and Trotter\thanks{This work was 
partially supported by the {\em Actions
de Recherche Concert\'ees (ARC)\,} fund of the {\em Communaut\'e
fran\c{c}aise de Belgique}.}
}
\author{
Samuel Fiorini\thanks{Universit\'e Libre de Bruxelles, D\'epartement de Math\'ematique,
c.p. 216,  B-1050 Bruxelles, Belgium,  sfiorini@ulb.ac.be.}
\and Gwena\"el Joret\thanks{Universit\'e Libre de Bruxelles, D\'epartement d'Informatique,
c.p. 212,  B-1050 Bruxelles, Belgium, gjoret@ulb.ac.be. G. Joret is a Research Fellow of the Fonds 
National de la Recherche Scientifique (F.R.S.--FNRS).}
}
\date{}
\begin{document}

\maketitle

\begin{abstract}
Sewell and Trotter proved that every connected {\ac} graph 
that is not isomorphic to $K_{1}, K_{2}$ or an odd cycle
contains a {\todd}. 
Their theorem implies an interesting min-max
relation for stable sets in graphs without {\todd s}.
In this note, we give a simpler proof of Sewell and Trotter's theorem.
\end{abstract}

\section{Introduction}

Graphs considered in this note are finite, simple, and undirected.
A graph $G$ is {\em \ac} if $\alpha(G-e) > \alpha(G)$ for every $e\in E(G)$, where
$\alpha(G)$ denotes the maximum cardinality of a stable set in $G$.
A subdivision of $K_{4}$ is {\em totally odd} if each edge of $K_{4}$
has been replaced with an odd-length path. 

Answering a question of Chv\'atal~\cite{C75}, Sewell and Trotter~\cite{ST93}
proved the following theorem.

\begin{theorem} [\cite{ST93}]
\label{th-ST}
Every connected {\ac} graph that is not isomorphic to $K_{1}, K_{2}$ or an odd cycle
contains a {\todd}.
\end{theorem}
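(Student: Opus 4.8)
The plan is to argue by induction on $|V(G)|$, exploiting the interplay between $\alpha$-criticality and the operation of subdividing an edge. The two starting observations are that $K_4$ is itself $\alpha$-critical, and that replacing an edge of an $\alpha$-critical graph by an internally disjoint odd path again yields an $\alpha$-critical graph (equivalently, subdividing an edge an even number of times preserves $\alpha$-criticality, in both directions). In particular a totally odd $K_4$-subdivision is precisely an odd subdivision of $K_4$, hence is $\alpha$-critical. I would first record two basic facts about a connected $\alpha$-critical graph $G$ that is none of the listed exceptions: it is $2$-connected (a standard property of connected $\alpha$-critical graphs other than $K_1,K_2$), and it has a vertex of degree at least $3$, since otherwise $G$ is a path or a cycle and $\alpha$-criticality forces $K_1$, $K_2$ or an odd cycle. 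I would also use the elementary consequence of the definition that for every edge $uv$ there is a maximum stable set of $G$ containing $u$ but not $v$.

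For the inductive step I would eliminate long chains of degree-$2$ vertices. If $G$ contains a path $x\,p_1\,p_2\cdots y$ whose internal vertices all have degree $2$ and whose length is at least $3$, I would suppress two consecutive internal vertices; the reverse of the even-subdivision remark shows the resulting graph $G'$ is again connected and $\alpha$-critical, it still has a vertex of degree $\ge 3$ (the branch vertices keep their degrees), and it is smaller, so by induction $G'$ contains a totally odd $K_4$-subdivision. Re-inserting the two suppressed vertices changes the length of exactly one branch-path by $2$, hence preserves the parities of all six paths and lifts the subdivision back into $G$. One must check that the suppression keeps the graph simple, i.e.\ that the new edge is not already present; this can only fail when the chain has length exactly $3$, and I would handle that short configuration separately or by choosing a different reduction. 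After this step I may assume every maximal chain of degree-$2$ vertices has at most one internal vertex.

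It then remains to produce a totally odd $K_4$-subdivision in this reduced graph, and here I would split the task in two. First, I would show $G$ is not series--parallel: a $2$-connected series--parallel $\alpha$-critical graph has no $K_4$-minor, and a short analysis of its series/parallel decomposition (each parallel pair of paths must close into an odd cycle, while a branch vertex of degree $\ge 3$ then creates conflicting parity constraints) forces it to be an odd cycle, contrary to assumption. Since a $2$-connected graph is series--parallel if and only if it has no $K_4$-subdivision, $G$ therefore contains some $K_4$-subdivision, with branch vertices $t_1,\dots,t_4$ and six connecting paths $P_{ij}$.

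The main obstacle is to guarantee that all six paths can be taken odd, and this is the step I expect to be hardest. I would fix a maximum stable set $S$ and use the abundance of critical edges supplied by $\alpha$-criticality: the property that every edge lies outside some maximum stable set yields many alternating structures along which an even branch-path $P_{ij}$ can be rerouted, or along which a branch vertex $t_i$ can be shifted, so as to flip the parity of $P_{ij}$ without disturbing the other five paths. Formalizing this parity correction is the crux of the argument; the tools I would bring to bear are the $2$-connectivity established above and Gallai's degree bound $\Delta(G)\le |V(G)|-2\alpha(G)+1$, which guarantees enough room around the branch vertices to perform the rerouting.
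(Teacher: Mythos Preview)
Your preliminary reductions---suppressing pairs of consecutive degree-$2$ vertices via Lemma~\ref{lem-deg2}, and ruling out the series--parallel case---are reasonable, though the latter needs more than the one-sentence sketch you give. But this is not where the theorem lives. The genuine gap is the parity correction, which you yourself flag as ``the crux of the argument'' and then leave as an aspiration. Going from \emph{some} $K_4$-subdivision to a \emph{totally odd} one is the entire content of Sewell and Trotter's result; everything preceding it is soft. Neither Gallai's degree bound nor the bare existence of ``alternating structures'' along critical edges provides a mechanism for flipping the parity of a single branch-path $P_{ij}$ while fixing the other five: an alternating walk out of a branch vertex will typically meet several of the six paths at once, and you propose no invariant or potential function guaranteeing that a sequence of reroutings or branch-vertex shifts makes progress toward the all-odd configuration. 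This is exactly the step that forced the intricate case analysis of three intersecting odd cycles in the original proof, and your outline offers no substitute for it.

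The paper, by contrast, never attempts to repair parities. It proves a strengthened statement (Theorem~\ref{th}) that additionally controls what happens at a triangle, takes a minimum counter-example $G$, and performs a local modification---rotating an edge so as to create a triangle when $G$ is triangle-free, or deleting a triangle after adding a bridging edge otherwise---to produce a smaller $\alpha$-critical graph $H'$. The strengthened hypothesis applied to $H'$ then yields a totally odd $K_4$-subdivision that transfers back to $G$ directly, in one sub-case after replacing a single edge by a path of length three. The subdivision produced is already totally odd; no parity is ever fixed after the fact.
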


As noted by Sewell and Trotter~\cite{ST95}, their result implies an interesting min-max
relation for the cardinality of a stable set in graphs having no {\todd} as a subgraph.
For an arbitrary graph $G$, denote by $\tilde \rho(G)$ the minimum cost of a family of 
vertices, edges and odd cycles covering $V(G)$, where 
the cost of a vertex or an edge is 1, the cost of an
odd cycle $C$ is $(|C| - 1)/2$, and the cost of a family is the sum of the costs of
its elements. Then clearly $\alpha(G) \le \tilde \rho(G)$.
Moreover, by Theorem~\ref{th-ST} we have $\alpha(G) = \tilde \rho(G)$ when $G$
has no {\todd}. (Indeed, it is always possible to find an {\ac} subgraph $G' \subseteq G$
with $\alpha(G') = \alpha(G)$ by removing some edges of $G$, and by 
Sewell and Trotter's theorem every component of $G'$ must be a vertex, an edge, or an odd cycle.)

A further consequence of Theorem~\ref{th-ST} is that we can efficiently find 
a maximum cardinality stable set
in a graph $G$ without {\todd}s. Roughly, $\alpha(G)$ equals then the optimum of a linear
program that can be solved in polynomial time, and by iteratively removing from $G$ any vertex
$v$ such that $\alpha(G-v)=\alpha(G)$ we eventually find a maximum stable set. We refer the 
interested reader to~\cite{S03B, ST95} for the details.

The main step of Sewell and Trotter's proof of Theorem~\ref{th-ST} 
consists in finding a {\todd} in the union of three carefully chosen 
odd cycles, by considering the various ways in which these odd cycles 
can intersect. A similar but more compact proof was given by 
Schrijver~\cite[pp. 1196---1199]{S03B}.

The purpose of this note is to present a new and simpler proof of
Sewell and Trotter's result. Our proof relies on the following two
ideas. First, we prove a strengthened version of Theorem~\ref{th-ST}. 
Second, we use the extra strength of the new statement to obtain a 
contradiction, essentially, by operating few local modifications on a
minimum counter-example. 

\begin{theorem} 
\label{th}
Let $G$ be a connected {\ac} graph that is not isomorphic to $K_{1}, K_{2}$, 
an odd cycle, nor to a {\todd}. Then 
\begin{itemize}
\item $G$ contains a {\todd}, and, moreover,
\item if $\{x_1,x_2,x_3\} \subseteq V(G)$ induces a triangle, then 
at least two of the three subgraphs $G-x_i$ contain a {\todd}.
\end{itemize}
\end{theorem}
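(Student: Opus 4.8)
The plan is to prove Theorem~\ref{th} by induction on $|V(G)|$, establishing both bullets at once. Two standard properties of an \ac{} graph $G$ with no isolated vertex will be used repeatedly: it has minimum degree at least $2$ unless $G \in \{K_1, K_2\}$, and every vertex lies outside some maximum stable set, so that $\alpha(G - v) = \alpha(G)$ for all $v \in V(G)$. The latter is immediate from $\alpha$-criticality: for a neighbour $u$ of $v$, any stable set of size $\alpha(G) + 1$ in $G - uv$ contains both $u$ and $v$, and deleting $v$ from it leaves a maximum stable set of $G$ avoiding $v$. Consequently $G - v$ admits a spanning \ac{} subgraph $H_v$ with $\alpha(H_v) = \alpha(G)$, obtained by deleting edges; as $|V(H_v)| < |V(G)|$, the induction hypothesis applies to each of its (\ac{}) components, so each component either contains a \todd{}---whence $G - v$ does too---or is isomorphic to $K_1$, $K_2$, or an odd cycle.

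The combinatorial heart is a way of turning an odd cycle plus an apex into a \todd{}. Suppose a vertex $x \notin V(C)$ has three neighbours $p_1, p_2, p_3$ on an odd cycle $C$; they split $C$ into three arcs, and a short parity computation shows that the $p_i$ can be chosen so that all three arcs are odd (three odd numbers do sum to the odd number $|C|$). Then $x, p_1, p_2, p_3$ are the branch vertices of a \todd{}: the three spokes $xp_i$ are odd and so are the three arcs. Such a choice exists as soon as $x$ has at least three neighbours on $C$ not all of the same parity along $C$; ruling out the degenerate configurations, where the neighbours are too few or badly placed, is exactly where odd cycles must be combined with one another, with edges of the $H_v$, and with the edges of the ambient graph.

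With these tools the inductive step proceeds by local surgery. If $G$ has a degree-$2$ vertex $v$ with non-adjacent neighbours $a, b$, I form the smaller graph $G'$ from $G - v$ by identifying $a$ and $b$; by a standard reduction lemma for \ac{} graphs, $G'$ is connected and \ac{} with $\alpha(G') = \alpha(G) - 1$. Unless $G'$ is one of the excluded graphs (in which case $G$ is at once seen to be an odd cycle or to contain a \todd{}), the induction hypothesis gives a \todd{} in $G'$, and since the identified vertex expands into a path of length $2$ every path-parity is preserved, so the subdivision lifts to $G$. Thus I may assume $G$ has no degree-$2$ vertex with non-adjacent neighbours. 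In particular, if $G$ is triangle-free then it has no degree-$2$ vertex at all; the second bullet is then vacuous, and if $G$ moreover contained no \todd{} every $H_v$ would be a disjoint union of vertices, edges, and odd cycles, so the apex construction applied to one of these odd cycles would produce a \todd{} in $G$ after all (or force $G$ to be an odd cycle), a contradiction. Hence I may assume $G$ has a triangle.

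It remains to prove the second bullet for a triangle $\{x_1, x_2, x_3\}$ of $G$, from which the first bullet is immediate. Suppose for contradiction that at most one of $G - x_1, G - x_2, G - x_3$ contains a \todd{}; then for two of the indices, say $1$ and $2$, every component of $H_{x_1}$ and of $H_{x_2}$ is a vertex, an edge, or an odd cycle. I now reinsert $x_2$ with its edges of $G$ and use $\alpha$-criticality to control how $x_2$ meets the odd cycles of $H_{x_2}$; the aim is to exhibit, via the apex construction, a \todd{} using $x_2$ as apex over an odd cycle avoiding $x_1$---which would lie in $G - x_1$ and contradict its failure---or, symmetrically, one using $x_1$ and avoiding $x_2$. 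The edges $x_1 x_3$, $x_2 x_3$ of the triangle and the vertex $x_3$ serve to link cycles or to supply the missing odd parities. The main obstacle is precisely the parity bookkeeping in this last step: one must enumerate how the odd cycles and edges of $H_{x_1}$ and $H_{x_2}$ can be positioned relative to $x_1, x_2, x_3$, and verify that in every configuration either the apex construction (suitably avoiding $x_1$ or $x_2$) goes through, or the configuration is so constrained that $G$ collapses to $K_4$ or an odd cycle, both excluded. This case analysis, replacing Sewell and Trotter's three-odd-cycle argument, is where the real work concentrates.
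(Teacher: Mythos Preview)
Your outline assembles the right preliminary ingredients (the degree-$2$ contraction, passing to a spanning \ac{} subgraph $H_v$ of $G-v$ and applying induction to its components), but the two places where the argument must actually bite are both left open, and at least one of them does not work as stated.

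In the triangle-free case you assert that, once every component of every $H_v$ is a vertex, edge or odd cycle, ``the apex construction applied to one of these odd cycles would produce a \todd{} in $G$ (or force $G$ to be an odd cycle)''. This does not follow. Nothing forces three neighbours of $v$ to lie on a single odd-cycle component of $H_v$; and even when they do, your own parity remark shows the three arcs can come out even--even--odd (e.g.\ neighbours at positions $0,2,4$ on a $7$-cycle), so no \todd{} appears. Your hedge that the construction works ``as soon as the neighbours are not all of the same parity along $C$'' is also false as stated: three neighbours at positions $0,2,3$ on a $9$-cycle have both parities present yet give arcs $2,1,6$. Repairing this requires precisely the kind of combination of several odd cycles and extra edges of $G$ that you defer to the second case---so the triangle-free case is not actually settled.

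In the triangle case you say explicitly that the parity bookkeeping ``is where the real work concentrates'', and then stop. But that bookkeeping is the entire difficulty: controlling how the odd-cycle components of $H_{x_1}$ and $H_{x_2}$ sit relative to $x_1,x_2,x_3$ and to one another is exactly the three-odd-cycle analysis of Sewell and Trotter that the paper set out to avoid. Without carrying it out (or replacing it) you have a plan, not a proof.

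For comparison, the paper sidesteps the parity enumeration altogether. It takes a counter-example minimizing $|E(G)|$ rather than $|V(G)|$, and in each case performs a single local modification: in the triangle-free case it \emph{rotates} one edge so as to create a triangle, and then invokes the \emph{second} bullet of the theorem on a graph with no more edges---the extra strength of that bullet is what lets the resulting \todd{} avoid the artificial edge and hence lie in $G$; in the triangle case it deletes the three triangle vertices and adds a single edge between two of their outside neighbours, strictly shrinking $|E|$. The structure is controlled not by tracking arc parities but through the set $\Ec(G-u)$ of critical edges of $G-u$: two short claims force enough edges to be critical that any \asub{} of the modified graph has a component of maximum degree at least~$3$, to which the theorem applies. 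No analysis of how odd cycles intersect is needed.
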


\section{The Proof}

The following lemma summarizes some basic properties of {\ac} graphs 
which we will need; see for instance Lov\'asz~\cite{L93} or 
Lov\'asz and Plummer~\cite{LP86} for a proof.
\begin{lemma}
\label{lem-deg2}
Let $G$ be a connected {\ac} graph with $|V(G)| \ge 4$.
Then every vertex has degree at least 2.
Moreover, if $u\in V(G)$ has exactly two neighbors $v,w$ in $G$, then 
$v$ and $w$ are not linked;
the only common neighbor of $v$ and $w$ is $u$; and
contracting $uv$ and $uw$ results in another \ac{} graph.
\end{lemma}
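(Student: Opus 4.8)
The plan is to work throughout from the following reformulation of \ac{}ity: for every edge $e=xy$ we have $\alpha(G-e)=\alpha(G)+1$, and, crucially, every stable set of $G-e$ of size $\alpha(G)+1$ must contain \emph{both} $x$ and $y$ (otherwise it would avoid $e$ and hence be a stable set of $G$ of size $\alpha(G)+1$). I will call such a set a \emph{witness} for $e$. For the minimum-degree statement, I would suppose some vertex $u$ has degree $1$ with unique neighbour $v$. Since $G$ is connected and $|V(G)|\ge 4$, the vertex $v$ has a neighbour $w\ne u$; taking a witness $S$ for the edge $vw$ (so $v,w\in S$), the edge $uv$ forces $u\notin S$, and then $(S\setminus\{v\})\cup\{u\}$ is a stable set of $G$ of size $\alpha(G)+1$, a contradiction. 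Hence every vertex has degree at least $2$.

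For the remaining assertions I fix a vertex $u$ with exactly two neighbours $v,w$ and set $H=G-u$. First I would extract the numerical backbone: writing a witness for $uv$ and one for $uw$, and expressing $\alpha(G-uv)$ and $\alpha(G-uw)$ in terms of $\alpha(H)$, $\alpha(H-v)$, $\alpha(H-w)$ and $\alpha(H-v-w)$, the \ac{}ity of these two edges pins down $\alpha(H-v)=\alpha(H-w)=\alpha(H)=\alpha(G)$ and $\alpha(H-v-w)=\alpha(G)-1$. In particular $\alpha(G)=\alpha(H)$, so no maximum stable set of $G$ avoids all of $u,v,w$ (else we could add $u$). This identity is the engine for everything that follows.

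Now the non-adjacency claim and the common-neighbour claim both follow from a single contradiction. In either case I would locate an edge $vx$ with $x\notin\{u,w\}$ whose witness $P$ is \emph{forced} to exclude $w$: if $vw\in E(G)$ this is automatic since $v\in P$; if instead $x$ were a second common neighbour of $v$ and $w$, then $x\in P$ together with the edge $xw$ forces $w\notin P$. Using $\alpha(H-v-w)=\alpha(G)-1$, \ac{}ity of $vx$ gives $|P|=\alpha(G)+1$ with $v,x\in P$ and $w\notin P$; then $(P\setminus\{v\})\cup\{u\}$ is stable in $G$ of size $\alpha(G)+1$, contradicting $\alpha(G)=\alpha(H)$. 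Connectivity together with $|V(G)|\ge 4$ guarantees that at least one of $v,w$ has a neighbour outside $\{u,v,w\}$, which is exactly what lets me run the first case. This shows $v,w$ are non-adjacent and that $u$ is their only common neighbour.

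Finally I turn to the contraction $G'$ obtained by identifying $u,v,w$ into a single vertex $z$. The two claims just proved are precisely what guarantees $G'$ is simple (no loop since $vw\notin E(G)$, no parallel edges since $N(v)\setminus\{u\}$ and $N(w)\setminus\{u\}$ are disjoint). I would first show $\alpha(G')=\alpha(G)-1$ by a correspondence in which $z\in S'$ plays the role of $\{v,w\}\subseteq S$ and $z\notin S'$ pairs with $u\in S$; both directions reduce to the identities of the previous step. To conclude that $G'$ is \ac{}, I check criticality edge by edge: given an edge $f$ of $G'$, I take a witness in $G$ for the corresponding edge and push it through the same correspondence to obtain a stable set of size $\alpha(G)=\alpha(G')+1$ in $G'-f$. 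The hard part will be exactly this last verification for the edges incident to $z$: there one must check that the chosen witness avoids the appropriate part of $N(v)\cup N(w)$ so that $z$ can legitimately be inserted, and this is where the disjointness and the non-adjacency established above are genuinely used.
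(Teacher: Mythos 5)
Your proof is correct, but note that there is nothing in the paper to compare it against: the paper does not prove Lemma~\ref{lem-deg2} at all, quoting it with a pointer to Lov\'asz~\cite{L93} and Lov\'asz--Plummer~\cite{LP86}. What you have reconstructed is, in essence, the classical exchange argument from those references, and it is sound. The engine is exactly right: for any edge $e=xy$ of an {\ac} graph, $\alpha(G-e)=\alpha(G)+1$ and every maximum stable set of $G-e$ (your ``witness'') contains both $x$ and $y$; swapping $v$ out for $u$ in a suitable witness kills the degree-$1$, adjacency, and common-neighbour cases in one stroke, and your connectivity remark correctly supplies the needed edge $vx$ with $x\notin\{u,w\}$ when $vw\in E(G)$. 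Two small remarks. First, your ``numerical backbone'' is correct but heavier than needed: in the non-adjacency and common-neighbour arguments the set $(P\setminus\{v\})\cup\{u\}$ is a stable set of $G$ itself of size $\alpha(G)+1$ (since $v\in P$ and the edge $uv$ force $u\notin P$), which contradicts the definition of $\alpha(G)$ outright --- the identities $\alpha(G)=\alpha(H)$ and $\alpha(H-v-w)=\alpha(G)-1$ play no essential role there. Second, the contraction step is only sketched, but the sketch is the right one and you have flagged precisely the delicate points: for an edge $f=zx$ at the merged vertex $z$, say with $x\in N(v)\setminus N(w)$, a witness $P$ for $vx$ in $G$ splits into two cases --- if $w\in P$ replace $\{v,w\}$ by $z$, if $w\notin P$ simply drop $v$ --- and in the first case it is exactly the disjointness of $N(v)\setminus\{u\}$ and $N(w)\setminus\{u\}$ that keeps $x$ non-adjacent to $z$ in $G'-f$, while the non-adjacency of $v$ and $w$ is what makes the reverse lift $(S'\setminus\{z\})\cup\{v,w\}$ stable in the verification that $\alpha(G')=\alpha(G)-1$. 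Written out in full, your outline yields a complete, self-contained proof of the lemma, which the paper itself chose to import rather than prove.
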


Consider now an arbitrary graph $G$. 
The maximum degree of a vertex in $G$ is denoted by $\Delta(G)$.
We say that an edge $e$ of
$G$ is {\em critical} if $\alpha(G-e) > \alpha(G)$. Let $\Ec(G)$
denote the set of critical edges of $G$. We call a subgraph $G' 
\subseteq G$ a {\em \asub} of $G$ if $V(G')=V(G)$, $\alpha(G')=\alpha(G)$, 
the graph $G'$ contains every critical edge of $G$, and $G'$
is $\alpha$-critical (see Figure~\ref{fig-asub} for an example).
Any such subgraph can be obtained from $G$ by iteratively removing
some edge which is non critical in the current subgraph, as long 
such an edge exists. In particular, every graph $G$ has a {\asub}.

\begin{figure}
\centering
\includegraphics[width=3.5cm]{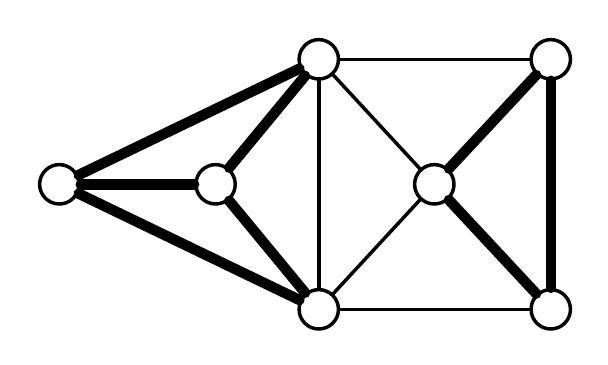}
\hspace{1cm}
\includegraphics[width=3.5cm]{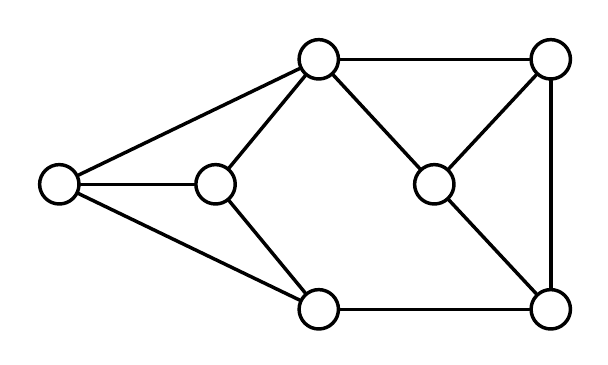}
\caption{\label{fig-asub} A graph $G$ with its critical edges drawn in bold (left), 
and a {\asub} of $G$ (right).}
\end{figure}

\begin{proof}[Proof of Theorem~\ref{th}]
Let $G$ be a counter-example with $|E(G)|$ minimum.
If there exists a minimum counter-example with a triangle,
we assume that $G$ has one. The outline of the proof is as
follows. After gathering some basic facts which will be 
repeatedly used subsequently, we split the proof into two
cases according to whether $G$ is triangle-free (Case I) or 
not (Case II). In both cases, we first construct a new graph $G'$
by locally modifying $G$. We then consider some critical subgraph
$H$ of $G'$ and choose a component $H'$ of $H$. Then $H'$ is a
connected \ac{} graph and not a counter-example. So we can apply 
the theorem to it. Finally we show that the theorem has to hold 
for $G$ too, which is a contradiction. In Case I, the new graph 
$G'$ is obtained by rotating some edge around one of its ends in 
such a way that a triangle appears. In Case II, the new graph $G'$ 
is obtained by adding an edge whose ends are both at distance $1$ 
from some triangle and then removing the three vertices of the 
triangle.

\begin{claim}
\label{claim-deg}
Every vertex $u$ of $G$ satisfies $\deg_{G}(u) \ge 3$.
\end{claim}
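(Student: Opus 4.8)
The plan is to argue by contradiction. Since the only connected \ac{} graphs on at most three vertices are $K_1$, $K_2$ and $K_3$, our graph $G$ has at least four vertices, so Lemma~\ref{lem-deg2} applies and gives minimum degree at least $2$; it therefore suffices to rule out a vertex $u$ of degree exactly $2$. Let $v,w$ be its two neighbors. Lemma~\ref{lem-deg2} tells us that $v$ and $w$ are non-adjacent, that $u$ is their only common neighbor, and that the graph $G'$ obtained by contracting both $uv$ and $uw$ — that is, by identifying $u,v,w$ into a single new vertex $z$ — is again \ac. Because $v$ and $w$ share no neighbor other than $u$ and are not adjacent, no loops or parallel edges arise, so $G'$ is simple; it is also connected, and $|E(G')| = |E(G)| - 2 < |E(G)|$. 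By minimality of $G$, the graph $G'$ is therefore not a counter-example.

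The core of the argument is a lifting principle: every \todd{} of $G'$ yields one of $G$, and, more precisely, if $x'$ is a vertex of $G'$ and $x$ is the corresponding vertex of $G$, then every \todd{} of $G'-x'$ yields one of $G-x$. The point is that any appearance of $z$ in a subdivision can be replaced by $v$, by $w$, or by the length-$2$ path $v\,u\,w$, according to whether the incident subdivision-edges come from $v$, from $w$, or from both; inserting this path lengthens a subdivision-path by exactly $2$ and hence preserves the parity of every path of the subdivision. The two configurations to check separately are when $z$ is an interior (degree-$2$) vertex of the subdivision and when it is a branch (degree-$3$) vertex.

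Next I would show that $G'$ is none of the excluded graphs. If $G'$ were $K_1$ or $K_2$, then $G$ would have at most four vertices, and a short check — using that $v$ and $w$ have no common neighbor besides $u$ — shows that $G$ would have to be $K_3$, $P_3$ or $P_4$, none of which is an admissible $G$. If $G'$ were an odd cycle, then expanding $z$ back would either create a vertex of degree $1$ (impossible, as the minimum degree is at least $2$) or turn $G$ into a longer odd cycle (excluded). Finally, if $G'$ were a \todd, then the same expansion would make $G$ itself a \todd{} — again by the parity bookkeeping above, in both the interior and the branch case for $z$ — contradicting the hypothesis on $G$. I expect this last case to be the main obstacle, since a \todd{} is precisely the configuration for which the conclusion of the theorem can fail, so it cannot be fed into the induction and must be eliminated by hand.

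Having excluded all special cases, $G'$ is a connected \ac{} graph that is neither a counter-example nor exceptional, so the theorem holds for $G'$. Applying the lifting principle then yields both conclusions for $G$. First, a \todd{} of $G'$ lifts to one of $G$. Second, for any triangle $T$ of $G$ — noting that $u$ lies in no triangle and that $v,w$ cannot both lie in $T$, so that $T$ maps to a triangle $T'$ of $G'$ — the two vertex-deletions of $T'$ that contain a \todd{} lift to the corresponding vertex-deletions of $G$, giving the required two. Hence the theorem holds for $G$, contradicting that $G$ is a counter-example, and so $\deg_{G}(u)\ge 3$ for every vertex $u$.
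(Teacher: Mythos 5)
Your proof is correct and takes essentially the same route as the paper: both first invoke Lemma~\ref{lem-deg2} to exclude degree-$1$ vertices and then contract the two edges incident to a degree-$2$ vertex, contradicting the minimality of $|E(G)|$. The paper compresses this into a single sentence (``by contracting the two edges incident to $u$ we could obtain a smaller counter-example''), whereas you supply the details it leaves implicit --- simplicity and connectivity of the contracted graph, exclusion of the exceptional cases $K_1$, $K_2$, odd cycle and totally odd $K_4$-subdivision, and the parity-preserving lifting of such subdivisions back to $G$ --- all of which check out.
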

\begin{proof}
By Lemma~\ref{lem-deg2}, $G$ has no vertex with degree 1. Moreover, 
if $\deg_{G}(u)=2$ for some $u \in V(G)$, then using the same lemma
it follows that $u$ is not in a triangle, and that by contracting the 
two edges incident to $u$ we could obtain a smaller counter-example. 
\end{proof}

For $u \in V(G)$, let $G -_c u := (V(G-u), \Ec(G-u))$. Notice $\alpha(G-u)=\alpha(G)$, and 
hence 
\begin{equation}
\label{eq-Gu}
E(G -_c u)  =\{e\in \Ec(G): \exists S\subseteq V(G-u),  
S \textrm { is a maximum stable set in } G - e\}.
\end{equation}

\begin{claim}
\label{claim-Delta}
Let $u \in V(G)$. If $\Delta(G -_c u) \ge 3$, then $G-u$ contains a {\todd}.
\end{claim}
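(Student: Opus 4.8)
The plan is to deduce the claim from Theorem~\ref{th} applied to a smaller graph. First I would fix a \asub{} $H$ of $G-u$. By definition $V(H)=V(G-u)$, the graph $H$ is \ac{} with $\alpha(H)=\alpha(G-u)=\alpha(G)$, and $H$ contains every critical edge of $G-u$. Since the edge set of $G-_c u$ is precisely $\Ec(G-u)$, the set of critical edges of $G-u$, we have $E(G-_c u)\subseteq E(H)$; as both are spanning subgraphs of $G-u$ this yields $\Delta(H)\ge \Delta(G-_c u)\ge 3$. Hence $H$ has a vertex $v$ with $\deg_H(v)\ge 3$.

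Next I would pass to the component $H'$ of $H$ containing $v$. Any component of an \ac{} graph is again \ac{} (deleting an edge of $H'$ raises $\alpha(H)$, and the rise must occur within $H'$), so $H'$ is a connected \ac{} graph. Since $v\in V(H')$ has degree at least $3$, the graph $H'$ is none of $K_1$, $K_2$, or an odd cycle, each of which has maximum degree at most $2$. Finally, $\deg_G(u)\ge 3$ by Claim~\ref{claim-deg}, so $|E(H')|\le |E(H)|\le |E(G-u)|<|E(G)|$: the graph $H'$ has strictly fewer edges than the minimum counter-example $G$.

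To conclude I would split into two cases. If $H'$ is a \todd, then we are done at once, since $H'$ is a subgraph of $G-u$. Otherwise $H'$ satisfies every hypothesis of Theorem~\ref{th} while having fewer edges than $G$; by the minimality of $G$ it is not a counter-example, so the theorem holds for $H'$, and its first conclusion supplies a \todd{} inside $H'$. In either case $G-u$, which contains $H'$, contains a \todd, as desired.

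The crux of the argument is the bookkeeping encoded by the auxiliary graph $G-_c u$: one must be sure that the critical edges of $G-u$ all survive into the \asub{} $H$, so that the vertex of degree at least $3$ genuinely lives in the \ac{} graph to which the inductive hypothesis is applied. The only other point needing care is the degenerate case where $H'$ is itself a \todd; there Theorem~\ref{th} cannot be invoked, but the conclusion is immediate and must be handled separately.
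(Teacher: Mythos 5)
Your proof is correct and follows essentially the same route as the paper: take a critical subgraph $H$ of $G-u$, observe that $G-_c u$ is a spanning subgraph of $H$ so some component $H'$ has maximum degree at least $3$, note $H'$ is connected and \ac{} with fewer edges than $G$, and conclude via the minimality of the counter-example that $H'$ contains a \todd. The paper merely compresses your two final cases into the single observation that $H'$, not being a counter-example, must contain a \todd{} (a \todd{} contains itself).
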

\begin{proof}
Let $H$ be a {\asub} of $G-u$. By definition, $G -_c u$ is a spanning
subgraph of $H$. If $\Delta(G -_c u) \ge 3$, then there is a component $H'$ of $H$
with $\Delta(H') \ge 3$. Clearly, every component of an {\ac} graph is also {\ac}, thus
$H'$ is {\ac}. Since $H'$ is not a counter-example to Theorem~\ref{th}, it must contain
a {\todd}, and so does $G-u$.
\end{proof}

\begin{claim}
\label{claim-uvw}
Let $u \in V(G)$. Any edge of $G$ not incident to $u$ such that one of its ends is adjacent to $u$
belongs to $E(G -_c u)$.
\end{claim}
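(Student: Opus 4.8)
The plan is to lean entirely on the characterization of $E(G -_c u)$ recorded in equation~\eqref{eq-Gu}. Write the edge in question as $e = vw$, and assume without loss of generality that $v$ is its end adjacent to $u$, so that $uv \in E(G)$ while $u \notin \{v, w\}$; in particular $uv \neq vw$. Since $G$ is \ac{}, every edge of $G$ is critical, so the requirement $e \in \Ec(G)$ in \eqref{eq-Gu} is automatic. Thus, to conclude $vw \in E(G -_c u)$, it suffices to exhibit a maximum stable set of $G - vw$ that is contained in $V(G - u)$, that is, one avoiding $u$.

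The main point, and what I would flag as the only subtlety, is that no tailored construction is needed: \emph{any} maximum stable set of $G - vw$ already avoids $u$. To see this, I would fix an arbitrary maximum stable set $S$ of $G - vw$. Because $G$ is \ac{} and $vw \in E(G)$, we have $|S| = \alpha(G - vw) = \alpha(G) + 1$. Now $S$ is stable in $G - vw$ but, being larger than $\alpha(G)$, cannot be stable in $G$; since $G$ and $G - vw$ differ only in the edge $vw$, the set $S$ must contain both $v$ and $w$.

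It then remains only to observe that $u \notin S$. Indeed, $u$ is adjacent to $v$ in $G$, and as $uv \neq vw$ the edge $uv$ is still present in $G - vw$; since $v \in S$ and $S$ is stable in $G - vw$, the vertex $u$ cannot lie in $S$. Hence $S \subseteq V(G - u)$, and \eqref{eq-Gu} immediately gives $vw \in E(G -_c u)$, as claimed. The crux of the argument is thus recognizing that membership of $v$ in every maximum stable set of $G - vw$ is \emph{forced}, and that this membership alone rules out $u$ via the surviving edge $uv$; everything else is a direct unwinding of \eqref{eq-Gu} together with the $\alpha$-criticality of $G$.
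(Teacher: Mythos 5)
Your proof is correct and follows essentially the same route as the paper: both arguments observe that by $\alpha$-criticality every maximum stable set of $G-e$ must contain both ends of $e$, hence avoids $u$, and then invoke Eq.~\eqref{eq-Gu}. You merely spell out in more detail why both ends are forced into the stable set, which the paper states in one line.
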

\begin{proof}
Let $e$ denote the edge considered. Since $G$ is {\ac}, any maximum 
stable set of $G-e$ contains both of its ends and hence avoids $u$. 
Eq.~\eqref{eq-Gu} then implies that $e$ belongs to $E(G -_c u)$.
\end{proof}

{\bf \flushleft CASE I.}  
By Claim~\ref{claim-Delta}, we have $\Delta(G -_c u) \le 2$ for all $u\in V(G)$.
It follows then from Claim~\ref{claim-uvw} that 
\begin{itemize}
\item $G$ is cubic (3-regular), and 
\item $G$ has no subgraph isomorphic to $K_{2,3}$.
\end{itemize}
The graph $G$ must have two incident edges $uw,wv$ so 
that the only common neighbor of $u$ and $v$ is $w$.
Indeed, it is not difficult to check that the unique graph that is 
connected, cubic, triangle-free, not containing $K_{2,3}$ as a subgraph, and 
where every two incident edges lie 
in a common cycle of length 4 is the graph of the cube
(on 8 vertices), which is not {\ac}. Let $u_{1},u_{2}$ and
$v_{1},v_{2}$ be the two neighbors of $u$ and $v$, respectively, that are distinct from $w$. 
Let also $z$ be the neighbor of $w$ outside $\{u,v\}$.
 
By Claim~\ref{claim-uvw}, $uw \in E(G -_c v)$. Since $\Delta(G -_c v) \le 2$, we have
$uu_{1} \notin E(G -_c v)$ or $uu_{2} \notin E(G -_c v)$, 
say without loss of generality $uu_{2} \notin E(G -_c v)$.
Let $G' :=(G  - uu_{2}) + uv$. Using Eq.~\eqref{eq-Gu}, every maximum stable set of $G-uu_{2}$
contains $v$, hence $\alpha(G') = \alpha(G)$. This in turn implies
\begin{equation}
\label{eq-Gprime}
\{uv\} \cup E(G -_c u) \cup E(G -_c v) \subseteq \Ec(G').
\end{equation}

Let $H$ be an arbitrary {\asub} of $G'$ and denote by $H'$ the component 
containing $u$. Since $uw, wz \in E(G -_c v), vw \in E(G -_c u)$, it follows 
from Eq.~\eqref{eq-Gprime} that 
$\{u,v,w\}$ induces a triangle in $H'$ and $wz \in E(H')$. Lemma~\ref{lem-deg2}
then yields $uu_{1} \in E(H')$,  and  $vv_{1} \in E(H')$ or $vv_{2} \in E(H')$, say w.l.o.g.
$vv_{1} \in E(H')$. Using Claim~\ref{claim-uvw}, we have $e \in E(G -_c u)$ (resp.,
$e \in E(G -_c v)$) for every edge $e\neq uu_{1},vv_{1}$ which is incident to 
$u_{1}$ (resp., $v_{1}$) in $G$. Hence, by Eq.~\eqref{eq-Gprime}, $u,u_{1},v,v_{1},w$
have each degree at least 3 in $H'$, and in particular $H'$ is not isomorphic to 
a {\todd} (notice that $u_{1} \neq v_{1}$ by our choice of $u$ and $v$). 

As $|E(H')| \le |E(G)|$ and, by hypothesis, no minimum counter-example
has a triangle, we may apply the second part of Theorem~\ref{th} on $H'$ and 
triangle $\{u,v,w\}$, giving that  at least one of $H'-u, H'-v$ contains a {\todd}.  
Since that subdivision cannot use the edge $uv$, it also exists in $G$. 
This concludes the case where $G$ is triangle-free.

{\bf \flushleft CASE II.} Let $T=\{u,v,w\}$ be a triangle of $G$ such that 
both $G-u$ and $G-v$ contain no {\todd}.
By Claim~\ref{claim-Delta}, this implies $\Delta(G -_c u), \Delta(G -_c v) \le 2$, which 
in turn implies $\deg_{G}(x)=3$ for all $x\in T$, using Claim~\ref{claim-uvw}.
We will derive a contradiction by showing that 
$G-u$ or $G-v$ contains a {\todd}.

Suppose first that two distinct vertices $x,y\in T$ have a common neighbor
outside $T$. Then without loss of generality $x \in \{u,v\}$, and using $\deg_{G}(x)=3$, 
for every edge $e\in E(G)$ not incident to $x$ there exists a maximum stable set
in $G-e$ avoiding $x$. By Eq.~\eqref{eq-Gu}, this implies $G -_c x=G-x$.
Since $\Delta(G -_c x)=2$ and Theorem~\ref{th} applies to $G-x$, the latter 
graph is an odd cycle. 
Now, as $x$ is adjacent in $G$ to three consecutive vertices of this odd cycle, we
deduce that $G$ is a {\todd}, a contradiction. 
It follows that the neighbors outside $T$ of $u,v$ and $w$ are pairwise distinct; let us denote 
them respectively by $u',v'$ and $w'$.

Notice that any maximum stable set in $G - uu'$ must contain $v'$ and $w'$. 
In particular,  $\{u',v',w'\}$ is a stable set.
Let $G' := (G - T) + u'w'$. Using the previous remarks, it is easily seen 
that $\alpha(G') = \alpha(G) - 1$, which implies
\begin{equation}
\label{eq-other-Gprime}
\{u'w'\} \cup E((G -_c u')- T) \cup E((G -_c w') - T) \subseteq \Ec(G').
\end{equation}

Consider a {\asub} of $G'$, say $H$, and denote by $H'$ the component including $w'$. 
Since $\Delta(G -_c u) \le 2$ and $ww' \in E(G -_c u)$, the vertex $w'$ has a neighbor
$x$ outside $T$ such that $w'x \notin E(G -_c u)$. By Eq.~\eqref{eq-Gu}, this means 
that every maximum stable set in $G - w'x$ contains $u$. Hence, $w'x \in E((G -_c u') - T)$.
Also, using Claim~\ref{claim-uvw}, we have $e\in E((G -_c w') - T) $ for every edge 
$e \in E(G), e\neq w'x$ which is incident to $x$. 

Now, it follows from Eq.~\eqref{eq-other-Gprime} that $x$ has degree at least three in $H'$.
Since $|E(H')| < |E(G)|$, by applying Theorem~\ref{th} on $H'$
we deduce that the latter graph contains a {\todd} $K$.
We have $K \subset G-v$, unless $u'w' \in E(K)$. In the latter case, by
replacing the edge $u'w'$ of $K$ with the path $u'uww'$ we also obtain a {\todd} 
contained in $G-v$. This completes the proof of Theorem~\ref{th}. 
\end{proof}

To conclude, we mention that the second part of the statement of Theorem~\ref{th}
cannot be strengthened to ``the three subgraphs $G-x_i$ ($i = 1, 2, 3$) contain 
a {\todd}'', as illustrated by the rightmost graph in Figure~\ref{fig-asub}.

\bibliographystyle{plain}
\bibliography{sewell-trotter}

\begin{thebibliography}{1}

\bibitem{C75}
V.~Chv{\'a}tal.
\newblock On certain polytopes associated with graphs.
\newblock {\em J. Combin. Theory Ser. B}, 18:138--154, 1975.

\bibitem{L93}
L.~Lov{\'a}sz.
\newblock {\em Combinatorial Problems and Exercises}.
\newblock North-Holland Publishing Co., Amsterdam, second edition, 1993.

\bibitem{LP86}
L.~Lov{\'a}sz and M.~D. Plummer.
\newblock {\em Matching Theory}, volume 121 of {\em North-Holland Mathematics
  Studies}.
\newblock North-Holland Publishing Co., Amsterdam, 1986.
\newblock {A}nnals of Discrete Mathematics, 29.

\bibitem{S03B}
A.~Schrijver.
\newblock {\em Combinatorial optimization. {P}olyhedra and efficiency. {V}ol.
  {B}}, volume~24 of {\em Algorithms and Combinatorics}.
\newblock Springer-Verlag, Berlin, 2003.
\newblock Matroids, trees, stable sets, Chapters 39--69.

\bibitem{ST93}
E.~C. Sewell and L.~E. Trotter, Jr.
\newblock Stability critical graphs and even subdivisions of {$K\sb 4$}.
\newblock {\em J. Combin. Theory Ser. B}, 59:74--84, 1993.

\bibitem{ST95}
E.~C. Sewell and L.~E. Trotter, Jr.
\newblock Stability critical graphs and ranks facets of the stable set
  polytope.
\newblock {\em Discrete Mathematics}, 147:247--255, 1995.

\end{thebibliography}

\end{document}